\definecolor{myurlcolor}{rgb}{0,0,0.7}
\newtheorem{theorem}{Theorem}
\newcommand{\define}[1]{{\bf \boldmath{#1}}}
\newcommand{\diam}{\textrm{diam}}
\newcommand{\area}{\textrm{area}}
\newcommand{\R}{\mathbb{R}}
\title{The Lebesgue Universal Covering Problem}
\date{\today}                                           
\begin{document}

\author{John C. Baez and Karine Bagdasaryan}
\address{Department of Mathematics, University of California, Riverside CA 92521, USA }
\email{baez@math.ucr.edu, karine.bagdasaryan@ucr.edu}
\author{Philip Gibbs}
\address{6 Welbeck Drive
Langdon Hills, Basildon, Essex, SS16 6BU, UK}
\email{philegibbs@gmail.com }
\maketitle

\begin{abstract} 
In 1914 Lebesgue defined a `universal covering' to be a convex subset of the plane that contains an isometric copy of any subset of diameter 1.  His challenge of finding a universal covering with the least possible area has been addressed by various mathematicians: P\'al, Sprague and Hansen have each created a smaller universal covering by removing regions from those known before.  However, Hansen's last reduction was microsopic: he claimed to remove an area of $6 \cdot 10^{-18}$, but we show that he actually removed an area of just $8 \cdot 10^{-21}$.   In the following, with the help of Greg Egan, we find a new, smaller universal covering with area less than $0.8441153$.  This reduces the area of the previous best universal covering by $2.2 \cdot 10^{-5}$.  
\end{abstract}

\section{Introduction}
 
A `universal covering' is a convex subset of the plane that covers a translated, reflected and/or rotated version of every subset of the plane with diameter 1.  In 1914, Lebesgue \cite{Hansen} laid down the challenge of finding the universal covering with least area, and since then various other mathematicians have worked on it, slightly improving the results each time.  The most notable of them are J.\ P\'al \cite{Pal}, R.\ Sprague \cite{Sprague}, and H.\ C.\ Hansen \cite{Hansen}.   More recently Brass and Sharifi \cite{Brass} have found a lower bound on the area of a universal covering, while Duff \cite{Duff} showed that dropping the requirement of convexity allows for an even smaller area.

The simplest universal covering is a regular hexagon in which one can inscribe a circle of diameter 1.  All the improved universal coverings so far have been constructed by removing pieces of this hexagon.   Here we describe a new universal covering with less area than those known so far.  

To state the problem precisely, recall that the diameter of a set of points $A$ in a metric space is defined by  
\[ \diam(A)=\sup\{d(x,y) : x,y\in A\} .\]
We define a \define{universal covering} to be a convex set of points $U$ in the Euclidean
plane $\R^2$ such that any set $A \subseteq \R^2$ of diameter $1$ is isometric to a subset of $U$.  If we write $\cong$ to mean `isometric to', we can define
the collection of universal coverings to be
\[ \mathcal{U}=\{U :  \textrm{ for all } A \textrm{ such that } \diam(A) = 1  \textrm{ there exists } B\subseteq U \textrm{ such that } B\cong A\} .\]  
The quest for smaller and smaller universal coverings is in part the search for this number:
 \[ a=\inf\{\area(U) : U \in \mathcal{U}\} .\]  

In 1920, P\'al \cite{Pal} made significant progress on Lebesgue's problem.
The regular hexagon in which a circle of diameter $1$ can be inscribed 
has sides of length $1/\sqrt{3}$.  P\'al showed that this hexagon gives a universal
covering, and thus 
\[ a \leq\frac{\sqrt3}{2}=0.86602540\dots . \]  
P\'al then took this regular hexagon and fit the largest possible regular dodecagon inside it, as shown in Figure 1.  He then proved that two of the resulting corners could be removed to give a smaller universal covering.  This implies that
\[ a \leq 2-\frac{2}{\sqrt3}=0.84529946\dots . \]
(Here and in all that follows, when we speak of `removing' a closed set $Y \subseteq \R^2$ from another closed set $X \subseteq \R^2$, we mean to take the set-theoretic difference $X - Y$ and then form its closure.  The universal covers
we discuss are all closed.)

In 1936, Sprague went on to prove that more area could be removed from another corner of the original hexagon.  This proved
\[            a \leq 0.844137708436.\] 
In 1992, Hansen \cite{Hansen} took these reductions even further by removing more 
area from two different corners of P\'al's universal covering.  He claimed that the areas that could be removed were $4\cdot 10^{-11}$  and $6 \cdot 10^{-18}$, but we have found that his second calculation was mistaken.  The actual areas removed were $3.7507 \cdot 10^{-11}$ and $8.4541 \cdot 10^{-21}$, showing
\[      a \leq 0.844137708398. \]
We have created a Java applet illustrating Hansen's universal covering \cite{Gibbs}.  This allows the user to zoom in and see the tiny, extremely narrow regions removed by Hansen.

Our new improved universal covering gives
\[        a \leq 0.8441153. \]
This is about $2.2 \cdot 10^{-5}$ less than the previous best upper bound, due to Hansen. On the other hand, Brass and Sharifi \cite{Brass} have given the best known lower bound on the area of a universal covering.  They did so by using various shapes such as a circle, a pentagon, and a triangle of diameter $1$ in a certain alignment.  They concluded that 
\[    a \geq 0.832. \]
Thus, at present we know that
\[   0.832 \leq a \leq 0.8441153. \]

In their book \textsl{Old and New Unsolved Problems in Plane Geometry and 
Number Theory}, Klee and Wagon \cite{Klee} wrote:
\vskip 1em

\begin{center}
\textbf{\dots
it does seem safe to guess that progress on [this problem], which has been painfully slow in the past, may be even more painfully slow in the future.}
\end{center}

\vskip 1em

\noindent However, our reduction of Hansen's universal covering is about a million times greater than Hansen's reduction of Sprague's.  The use of computers played an important role.  Given the gap between the area of our universal covering and Brass and Sharifi's lower bound, we can hope for even faster progress in the decades to come!

\vfill
\eject

\section{Hansen's calculation}

Let us recall Hansen's universal covering \cite{Hansen} and correct
his calculation of its area.

\begin{center}
\href{http://citeseerx.ist.psu.edu/viewdoc/download?doi=10.1.1.167.6499\&rep=rep1\&type=pdf}
{
\includegraphics[scale=.15]{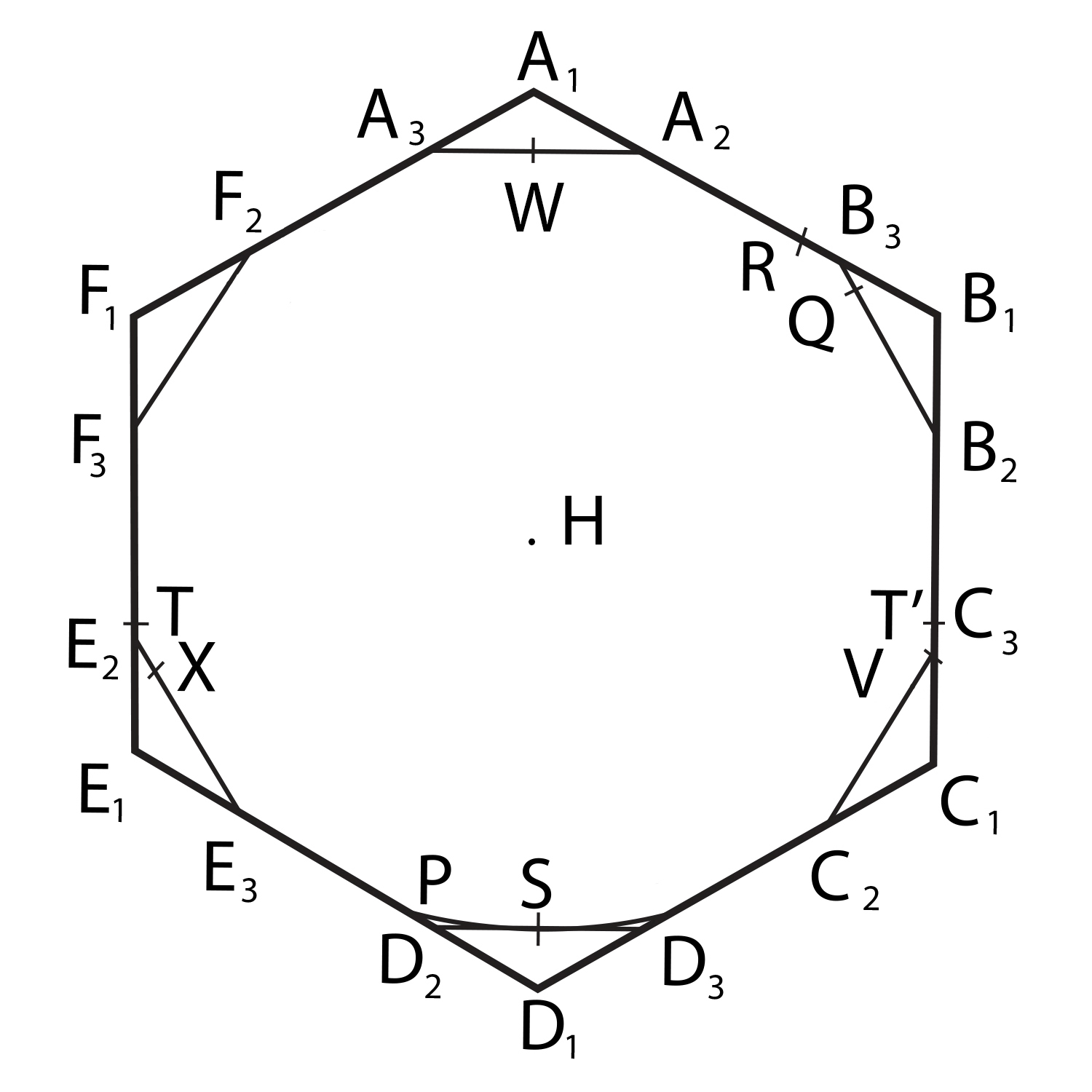}
}

\label{fig1}
1.  Figure similar to the one used by Hansen
\end{center}

\vskip 1em 
Figure 1 shows a regular hexagon $A_1B_1C_1D_1E_1F_1$ in which can
be inscribed a circle of diameter 1.  Inside this is a regular dodecagon, 
$A_3A_2B_3B_2C_3C_2D_3D_2E_3E_2F_3F_2$.  P\'al showed that one
can take the hexagon, remove the triangles $C_1C_2C_3$ and
$E_1E_2E_3$, and still obtain a universal covering.   We generalize his proof in Reduction 1 of Section \ref{improved}.

Later, Sprague \cite{Sprague} constructed a smaller universal covering.  To do this he showed that near $A_1$ the region outside the circle of radius $1$ centered at $E_3$ could be removed, as well as the region outside the circle of radius $1$ centered at $C_2$.  We generalize this argument in Reduction 2.

Building on Sprague's work, Hansen constructed a still smaller universal covering.  First he removed a tiny region $XE_2 T$, almost invisible in this diagram.  Then he removed a much smaller region $T'C_3V$.  Each of these regions is a thin sliver bounded by two line segments and an arc, as shown in our Java applet \cite{Gibbs}.

\vfill \break

To compute the areas of these regions, Hansen needed to determine certain distances $x_0, \dots , x_4$.  The points and arcs he used, and these distances, are defined as follows:

\begin{itemize}
\item The distance from $W$, the midpoint of $A_2A_3$, to the dodecagon corner $A_3$ is $x_0$.
\item The circle of radius $1$ centered at $W$ is tangent to $D_2D_3$ at $S$ and hits the dodecagon at $P$.  The distance from $S$ to the dodecagon corner $D_2$ can be seen to equal $x_0$.   The distance from $D_2$ to $P$ is $x_1$.
\item The circle of radius $1$ centered at $P$ is tangent to $B_3A_2$ at $R$ and hits the dodecagon at $Q$.  The distance from $R$ to the dodecagon corner $B_3$ can be seen to equal $x_1$.  The distance from $B_3$ to $Q$ is $x_2$.  
\item The circle of radius $1$ centered at $Q$ is tangent to $E_2E_3$ at $X$ and hits the dodecagon at $T$. The distance from $X$ to the dodecagon corner $E_2$ can be seen to equal $x_2$.  The distance from $E_2$ to $T$ is $x_3$.
\item The circle of radius $1$ centered at $T$ is tangent to $C_3B_2$ at $T'$ and hits the dodecagon at $V$.  The distance from $T'$ to the dodecagon corner $C_3$ can be seen to equal $x_3$.  The distance from $C_3$ to $V$ is $x_4$.
\end{itemize}

Here are the two regions Hansen removed:

\begin{itemize}
\item
Hansen's first region $XE_2T$ is bounded by the line segment $XE_2$, the much shorter line segment $E_2T$, and the arc $XT$ that is part of the circle of radius $1$ centered at $Q$.  He claimed this first region has an area $4 \cdot 10^{-11}$.  This calculation is approximately right. 
\item
Hansen's second region $T'C_3V$ is bounded by the line segment $T'C_3$, the much shorter line segment $C_3V$, and the arc $T'V$ that is part of the circle of radius $1$ centered at $T$.  He claimed this second region had an area of $6 \cdot 10^{-18}$.   This calculation is far from correct.
\end{itemize}

\noindent
In order to find the correct areas, we first calculate $x_0$ and then recursively calculate the other distances $x_i$.  

Let $S$ be the midpoint of $D_2 D_3$.  Recall that $x_0$ is the distance from $S$ to $D_2$, while $x_1$ is the distance from $D_2$ to $P$.   We compute these distances using some elementary geometry. To do this we must keep in mind some basic facts.  The length of each side of the hexagon is  $\frac{1}{\sqrt{3}}$.  The circle inscribed in the hexagon has radius $\frac{1}{2}$.  The arc from $S$ to $P$ has a radius of $1$ and is tangent to $SD_2$ at $S$.  

To compute the distance $x_0$ we first need to determine the distance $SD_1$.  
Let $H$ be the center of the circle, as in Figure 1.  The distance $SD_1$ is the distance $HD_1$ minus the distance $HS$, that is, $\frac{1}{\sqrt{3}}-\frac{1}{2}$.  
Since $D_2SD_1$ is a 30--60--90 triangle, the length $x_0 = D_2S$ is found $\sqrt{3}$ times $SD_1$.  Thus, 
\[   x_0 = 1 - \frac{\sqrt{3}}{2} .\]

Knowing $x_0$ we can then calculate $x_1$.   More generally, knowing $x_i$ we can calculate $x_{i+1}$. The diagram in Figure 2 can be used to solve for $x_{i+1}$, and further to calculate the area of the region $IJK$ bounded by the line $IJ$, the line $IK$, and the arc through $I$ and $K$ that is part of a circle of radius $1$ centered at $A$.  The two regions Hansen removed, namely $XE_2T$ and $T'C_3V$, are examples of this type of region $IJK$.

\vfill \eject
\begin{center}
{
\includegraphics[scale=.12]{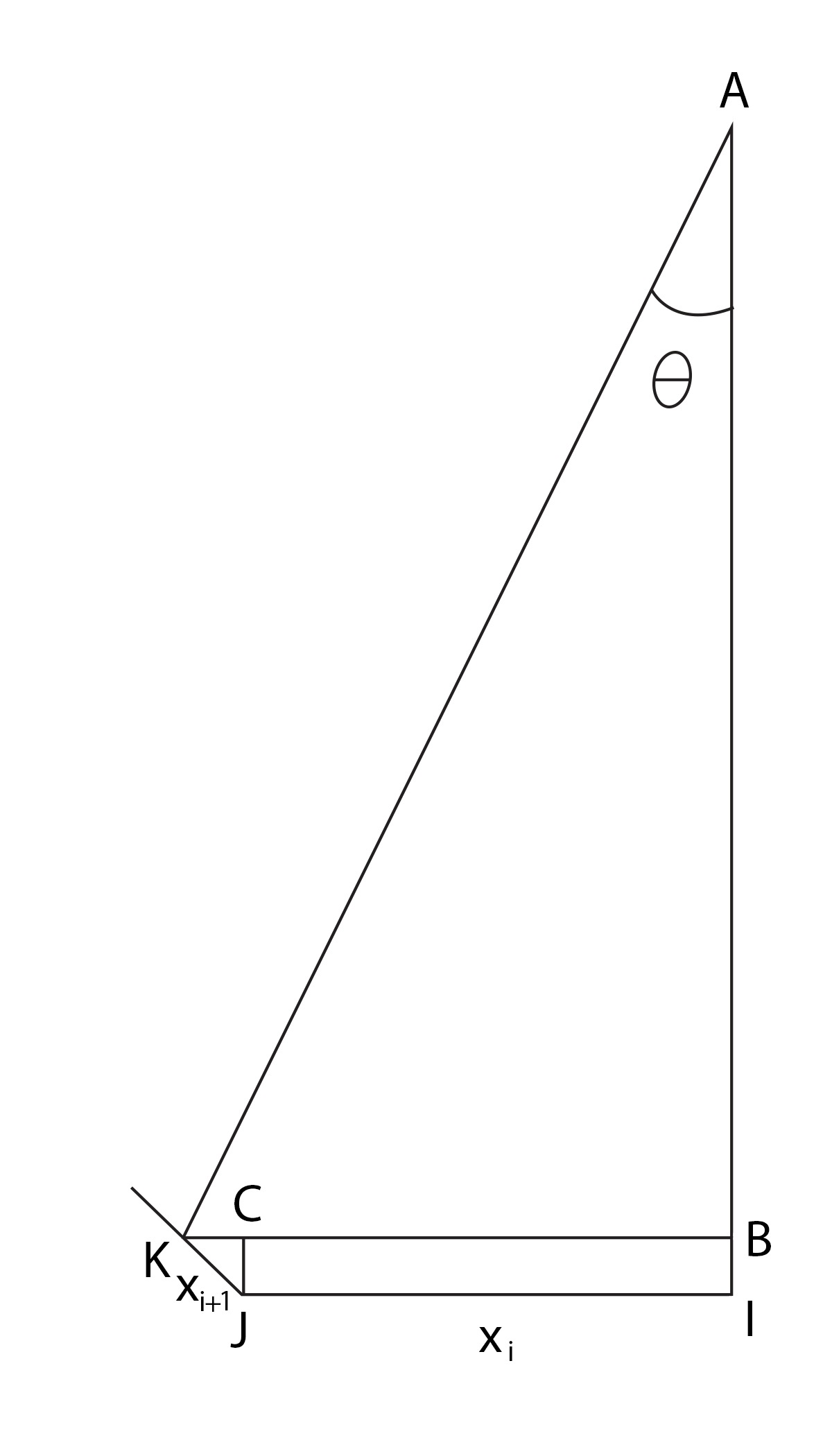}
}

\label{fig2}
2.  Diagram used to solve for $x_i$ and $x_{i+1}$
\end{center}
\vskip 1em 

In Figure 2 we let $AI$ and $AK$ be line segments of length $1$.  We let $AI$ be perpendicular to $JI$ and $KB$ and parallel to $CJ$. Let $x_i$ be the distance from $J$ to $I$ and $x_{i+1}$ be the distance from $K$ to $J$; $BI=CJ$.  $KI$ is an arc of a circle centered at $A$ with radius $1$.  We set $\angle IJK ={150}^\circ$, the interior angle of a regular dodecagon.

Note that $\angle IJC ={90}^\circ$ and $\angle CJK ={60}^\circ$.  The triangle $CJK$ is a right triangle with  $\angle CKJ ={30}^\circ$.  We thus obtain
\[BI=CJ=\frac{x_{i+1}}{2}, \quad CK=\frac{\sqrt{3}x_{i+1}}{2}, \quad BC =x_i .\]
Then by applying the Pythagorean theorem to the  triangle $ABK$, we see
\[ 1= (AK)^2=(BK)^2+(AB)^2=\left(CK+BC \right)^2+\left(1-BI\right)^2=\left(\frac{\sqrt{3}x_{i+1}}{2}+x_i \right)^{\!\! 2}+\left(1-\frac{x_{i+1}}{2}\right)^2. \]  
This gives a quadratic relationship between $x_i$ and $x_{i+1}$ which can be solved for $x_{i+1}$:
\[  x_{i+1} =\frac{1 - \sqrt{3} x_i - \sqrt{1 - 2 \sqrt{3}\,x_i - x_i^2}}{2} .\]
To avoid rounding errors we multiply the numerator and denominator by $1 - \sqrt{3} x_i + \sqrt{1 - 2 \sqrt{3}x_i - x_i^2}$.  This leaves us with
\[   x_{i+1} = \frac{2 x_i^2}{1 - \sqrt{3} x_i + \sqrt{1 - 2 \sqrt{3}x_i - x_i^2}} .\]

Let $a_i$ be the area of the region $IJK$ bounded by the line segments $IJ$, $JK$ and the arc centered at $A$ going through $I$ and $K$.  To compute $a_i$, we first calculate the area of the triangle $IJK$ and then subtract the area of the region between the line segment $IK$ and the arc through $I$ and $K$.  The area of the triangle $IJK$ is 
\[        \frac{IJ\cdot CJ}{2}=\frac{x_i x_{i+1}}{4} . \]
The area of the other region is 
\[            \frac{R^2}{2}(\theta -\sin\theta), \] 
where $\theta$ is the angle $\angle BAK$ and $R$ is the radius of the circle, namely $AK$, which is $1$.  We determine $\theta$ using the formula $\theta=2\sin^{-1}(d/2R)$, where $d$ is the length of the chord $IK$.    We thus obtain
\[  a_i = \frac{x_ix_{i+1}}{4} - \frac{(\theta - \sin\theta)}{2} \]
where $\theta = 2\sin^{-1}(\frac{d}{2})$ and  
\[d = \sqrt{\frac{{x_{i+1}}^2}{4} + \left(x_i + \frac{\sqrt{3}}{2}x_{i+1}\right)^{\!\!2}} \; .\]

Table 1 shows the lengths $x_i$ and areas $a_i$.  We see that the area of the first region Hansen removed, namely $XE_2 T$, is $a_2 \approx 3.7507 \cdot 10^{-11}$.
This is close to Hansen's claim of $a_2 = 4 \cdot 10^{-11}$.   However, we see that the area of the second region Hansen removed, namely $T'C_3V$, is $a_3 \approx 8.4541 \cdot 10^{-21}$.   This is significantly smaller than Hansen's claim of $a_3 = 6 \cdot 10^{-18}$.

\vskip 2em

{\vbox{   
\begin{center}   
{\small
\setlength{\extrarowheight}{7pt}
\setlength{\tabcolsep}{5pt}
\begin{tabular}{|c|l|l|}    \hline
$i$ & $x_i$  & $a_i$                                    \\ \hline
0& $1.339745962156 \cdot 10^{-1}$  & $4.952913815765 \cdot 10^{-4}$   \\
1& $2.413116066646 \cdot 10^{-2}$  & $2.418850424555 \cdot 10^{-6}$   \\
2& $6.080990483915 \cdot 10^{-4}$  & $3.750723412843 \cdot 10^{-11}$ \\
3& $3.701744790810 \cdot 10^{-7}$  & $8.454119457933 \cdot 10^{-21}$ \\
4& $1.370292328207 \cdot 10^{-13}$& $4.288332272809 \cdot 10^{-40}$ \\
\hline
\end{tabular}} 
\end{center}   
\vskip 1em 
\centerline{Table 1: Lengths $x_i$ and areas $a_i$ in Hansen's construction}
}}   

 \eject

\section{An improved universal covering}
\label{improved}

\begin{theorem}
 There exists a universal covering with area less than or equal to $0.8441153$.
\end{theorem}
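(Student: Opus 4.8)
The plan is to exhibit the covering explicitly, as the outcome of a finite sequence of area-removing reductions applied to P\'al's regular hexagon of width $1$ (the hexagon with an inscribed circle of diameter $1$), which is already known to be a universal covering. Writing the target covering as $U = H \setminus R$, where $H$ is this hexagon and $R$ is a union of several disjoint regions, I would remove $R$ in stages: first the two corner triangles $C_1C_2C_3$ and $E_1E_2E_3$ of the generalized P\'al step (Reduction 1), then the regions lying outside certain unit circles in the generalized Sprague step (Reduction 2), then the thin Hansen-type slivers $XE_2T$ and $T'C_3V$, and finally a new region that is the actual source of the improvement. To prove that $U \in \mathcal{U}$ it suffices to show that every $A$ with $\diam(A)=1$ has an isometric copy inside $U$; since $A$ and its convex hull have the same diameter and a copy of the hull contains a copy of $A$, I may assume $A$ is convex.

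The engine of every removal is a positioning argument: for each $A$ I must produce a rotation and translation placing $A$ inside $H$ while keeping it clear of $R$. Two geometric facts drive this. First, there is freedom in the orientation of a circumscribing width-$1$ regular hexagon of $A$—a continuum of choices for most $A$, and, in the rigid constant-width case, the hexagon's own $60^\circ$ rotational symmetry—and exploiting this freedom lets one clear a prescribed pair of the corner triangles, which justifies Reduction 1. Second, once a placement forces a point of $A$ to sit at a prescribed corner of the figure, the constraint $\diam(A)=1$ confines all of $A$ to the unit disk about that point; hence the part of $H$ lying outside such a unit circle is never needed and may be deleted, which justifies Reduction 2. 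The Hansen slivers and the new region are all instances of the region $IJK$ of Figure~2, and their removability follows from the same confinement argument applied recursively, with the side lengths $x_i$ and areas $a_i$ propagated by the quadratic relation and the area formula derived above and tabulated in Table~1.

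The hard part is that these removals are not independent: I must exhibit a \emph{single} rotation and translation of each $A$ that simultaneously avoids \emph{every} region of $R$, not merely each region in isolation. The binding case is a set of constant width $1$, such as a Reuleaux triangle, for which the circumscribing hexagon is essentially rigid at each orientation and $A$ touches all six sides at once; as $A$ rotates its extremal points sweep toward the corners, and I would have to bound precisely how far into each deleted region they can reach and confirm the bound is never violated for some common orientation. The genuinely new reduction, which produces the improvement of about $2.2\cdot 10^{-5}$ over Hansen, requires the most delicate such analysis, and it is here that computer assistance becomes essential for checking the resulting system of geometric inequalities to the needed precision.

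It remains to account for the area. The hexagon has area $\tfrac{\sqrt3}{2}=0.8660254\ldots$; Reduction 1 lowers this to $2-\tfrac{2}{\sqrt3}=0.8452995\ldots$; and Reduction 2 together with the Hansen slivers and the new region remove a further area of roughly $1.2\cdot 10^{-3}$, of which the final $2.2\cdot 10^{-5}$ is the part beyond Hansen's covering. Summing the areas of all removed regions and verifying the single numerical inequality $\area(U)\le 0.8441153$ then completes the proof.
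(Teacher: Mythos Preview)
Your proposal misidentifies the construction. The paper's covering is \emph{not} Hansen's covering with one more sliver removed. It is built from a different starting point: in Reduction~1 the second hexagon $\mathcal{H}'$ is rotated by $30^\circ+\sigma$ with a nonzero slant angle $\sigma\approx 1.29^\circ$, so the removed corner triangles $C$ and $E$ are \emph{not} P\'al's triangles $C_1C_2C_3$ and $E_1E_2E_3$ (the $\sigma=0$ case) but slanted versions of them. Hansen's slivers $XE_2T$ and $T'C_3V$ play no role in the new covering; Section~2 of the paper only revisits them to correct Hansen's area calculation. Consequently your accounting---P\'al plus Sprague plus Hansen plus ``something new''---does not match what is actually proved.

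The ``new region'' is also not of the $IJK$ type from Figure~2. It is a region $WXY$ near the top vertex $A$, bounded by two unit-radius arcs (centered at the slanted corners $O$ and $N$) and a straight chord $WY$; its shape and location depend on $\sigma$. The removability argument is not the recursive confinement you describe but a three-case analysis keyed to the reflection of the slanted triangles across the axis through $M$ (the midpoint of side $D_1E_1$). In Case~(3), when the curve avoids both reflected triangles $E'$ and $C'$, one \emph{reflects} the curve across that axis and chooses whichever of $\mathcal{Q},\mathcal{Q}'$ touches $D_1E_1$ on the correct side of $M$; this reflection freedom is the mechanism that clears $WXY$, and it has no analogue in Hansen's recursive $x_i$ scheme. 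The computer's job is then only to verify two angle inequalities ($\angle WYL>90^\circ$ and $\angle MWY>90^\circ$) at the chosen $\sigma$ and to evaluate the resulting area, not to check a ``system of geometric inequalities'' for all orientations of $A$.
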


\begin{proof}

We begin with the regular hexagon in Figure 1.   We shall modify this universal covering in the following ways:

\vskip 1em 
\subsubsection*{Reduction 1.} 

\begin{center}
{
\includegraphics[scale=.6]{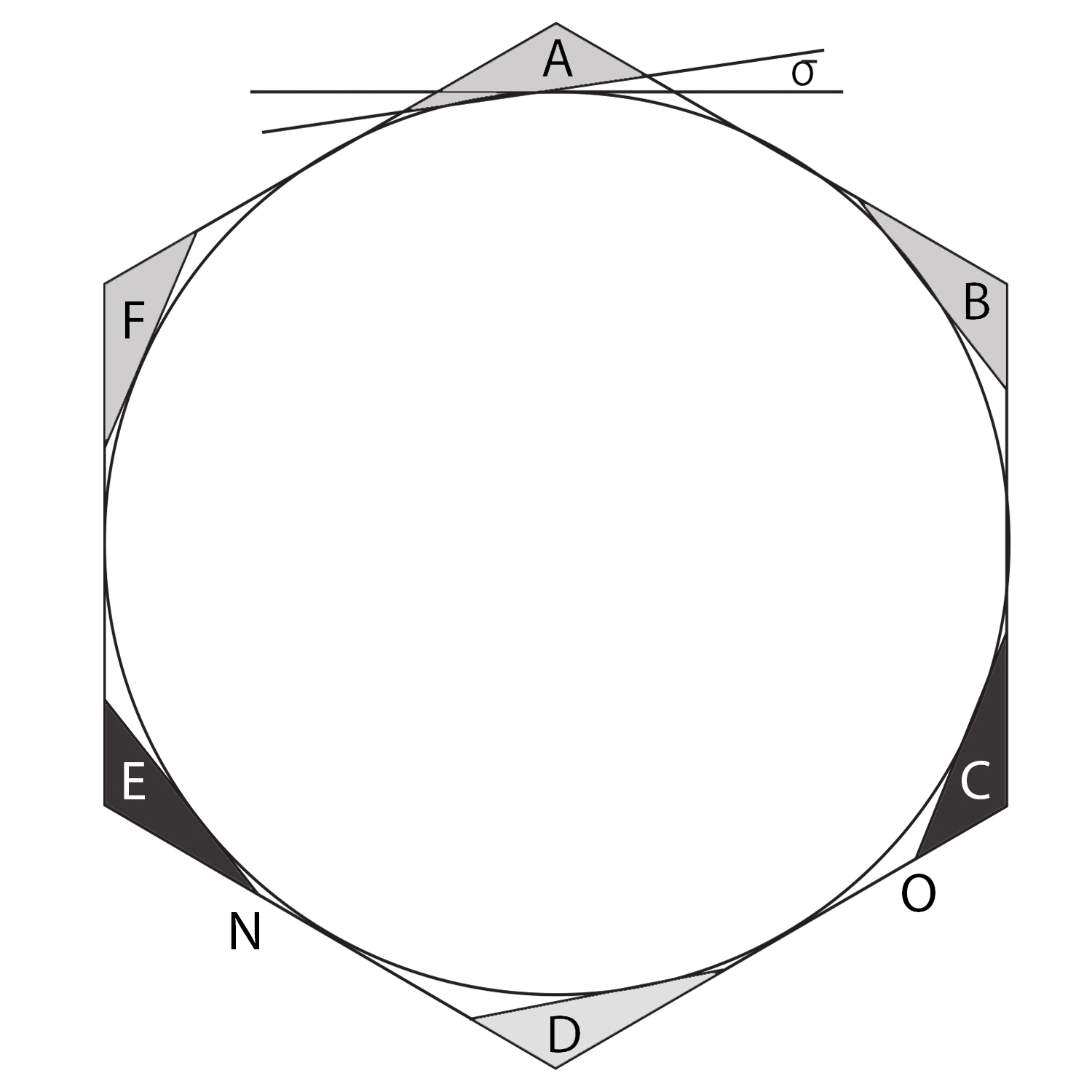}

}

\label{fig3}
3.  Reduction with slant angle $\sigma$ 
\end{center}
\vskip 1em 

Start with a regular hexagon $\mathcal{H}$ with a circle of diameter $1$ inscribed inside of it.  P\'al proved that this is a universal covering \cite{Pal}.  Let $\mathcal{H'}$ be a 
regular hexagon of the same size centered at the same point, rotated counterclockwise
by an angle of $30 + \sigma$ degrees.  

When $\sigma = 0$, the case considered by P\'al, the intersection $\mathcal{H} \cap \mathcal{H'}$ is the largest possible regular dodecagon contained in the original 
hexagon $\mathcal{H}$.  As shown in Figure 1, this dodecagon has corners $A_3A_2B_3B_2C_3C_2D_3D_2E_3E_2F_3F_2$.  P\'al proved
that if we remove the triangles $C_1C_2C_3$ and $E_1E_2E_3$ from the hexagon
$\mathcal{H}$, the remaining set is still a universal covering.

We generalize this to the case where $\sigma$  is some other small angle---say, less
than $10^\circ$.  Later we shall determine that the optimal value of $\sigma$ for our purposes is about $1.29^\circ$.  Note that $\mathcal{H}$ with $\mathcal{H}'$ removed is the union of six triangles $A,B,C,D,E,F$ shown in Figure 3.  Let $\mathcal{K}$ be $\mathcal{H}$ with $C$ and $E$ removed.  We claim that $\mathcal{K}$ is a universal covering.

To prove this, we generalize an argument due to to P\'al \cite{Pal}.  Imagine two parallel lines each tangent to the circle, containing two opposite edges of the rotated hexagon $\mathcal{H}'$.  Any pair of points inside $\mathcal{H}$ and on opposite sides of $\mathcal{H}'$ have a distance of at least $1$ from each other, since the diameter of the circle is $1$.  Consider the six triangles $A,B,C,D,E,F$.  An isometric copy of any set of diameter $1$ can be fit inside $\mathcal{H}$.  Since the distance between any two triangles that are opposite each other is $1$, this set cannot simultaneously have points in the interior of two opposite triangles. Thus, this set can only have a nonempty intersection with three adjacent triangles or three nonadjacent triangles. In the first case we can rotate the set so that the only triangles it intersects are $F$, $A$ and $B$.  In the second case we can rotate it so that the only triangles it intersects are $F$, $B$ and $D$.  In either case, it fails to intersect the triangles $E$ and $C$.  Thus, the set $\mathcal{K}$ consisting of the hexagon $\mathcal{H}$ with these two triangles removed is a universal covering. 

\vskip 1em 
\subsubsection*{Reduction 2.}

Recall that a \define{curve of constant width} is a convex subset of the plane whose \define{width}, defined as the perpendicular distance between two distinct parallel lines each having at least one point in common with the sets boundary but none with the set's interior, is independent of the direction of these lines.  Vre\'cica \cite{Vrecica} has shown that any subset of the plane with diameter $1$ can be extended to a curve of constant width $1$.   Thus, a set will be a universal covering if it contains an isometric copy of every curve of constant width $1$.    

Consider any curve of constant width $1$ inside $\mathcal{K}$.  It must touch each of the hexagon's sides at a unique point.  To see this, note that if it does not touch one of the sides, it would have width less than $1$.   On the other hand, if it touched a side at two points, then it would have width greater than $1$. 

The curve must touch the side of $\mathcal{H}$ running from $D$ to $C$ at some point to the left of $O$, the corner of the removed triangle $C$.  This implies that all points near $A$ outside an arc of radius $1$ centered on $O$ can be removed from the closure of $\mathcal{K}$, obtaining a smaller universal covering.

Similarly, the curve must touch the side of the hexagon running from $E$ to $D$ somewhere to the right of $N$, the corner of the removed triangle $E$.  Thus, all points near $A$ outside an arc of radius $1$ centered on $N$ can also be removed.  The remaining universal covering then has a vertex at the point $X$ where these two arcs meet; for a closeup see Figure 4. 

This reduces the area of the univeral covering, but not enough to make it smaller for any nonzero value of $\sigma$ than Sprague's universal covering, which is the case $\sigma=0$.   So, we must go further.

\vskip 2em

\begin{center}
{
\includegraphics[scale=.6]{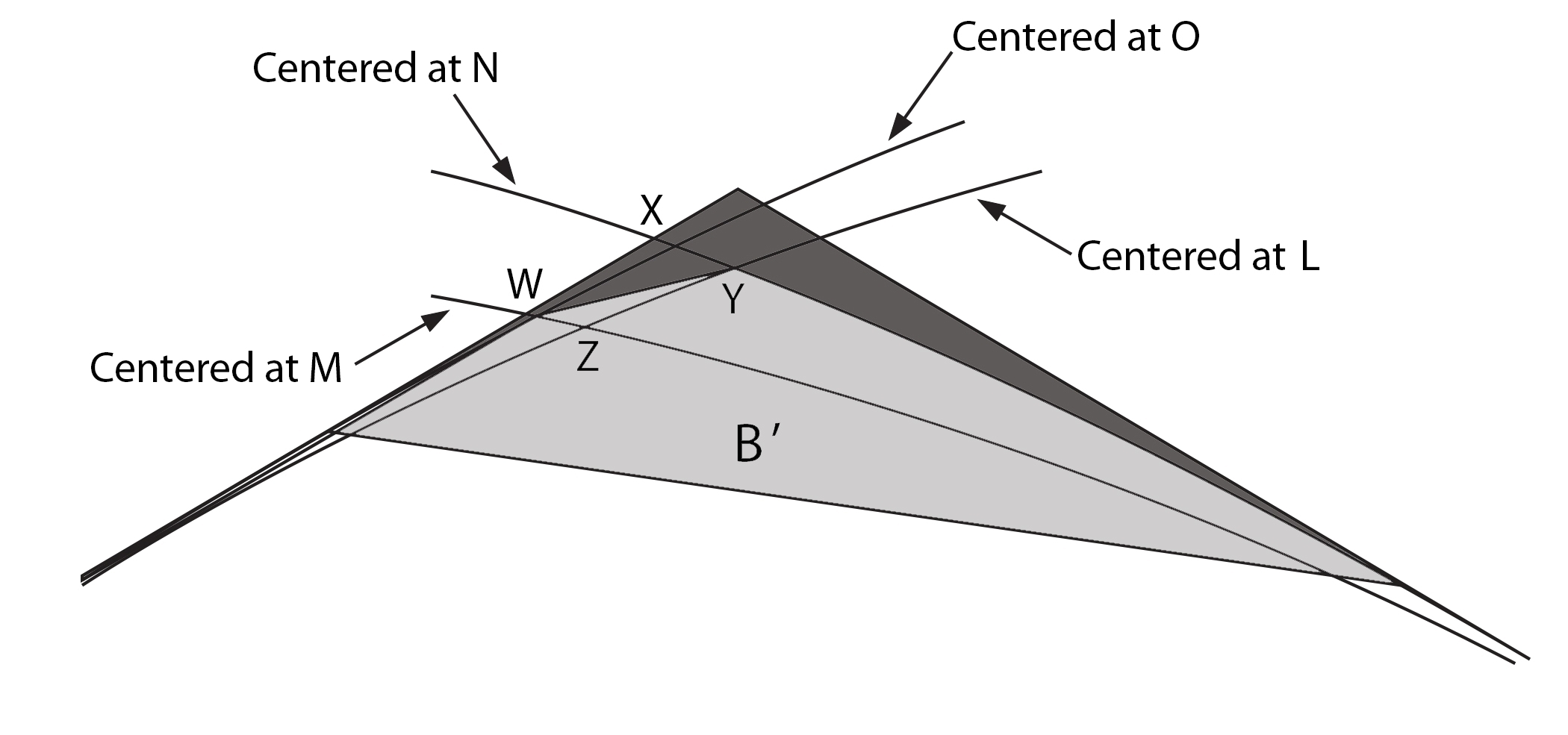}
}

\label{fig4}
4.  Reduction closeup
\end{center}

\subsubsection*{Reduction 3.} 
For the final reduction we refer to Figures 4 and 5.   Figure 4 shows a region $WXY$ bounded by two arcs $WX$ and $XY$ and a straight line segment $WY$ defined as follows:

\begin{itemize}
\item The arc through $W$ and $X$ is the arc of radius $1$ centered at $O$, which is a point where the rotated hexagon $\mathcal{H'}$ intersects the original hexagon $\mathcal{H}$.

\item $X$ is the point of intersection of the arc of radius $1$ centered at $O$ and the arc of radius $1$ centered on $N$, which is another point where the rotated hexagon intersects the original hexagon. 

\item $W$ is the intersection of the arc centered at $O$ with the arc of radius $1$ centered on $M$, the midpoint of the edge $E_1D_1$ of the original hexagon.

\item $Y$ is the intersection of the arc with radius $1$ centered at $N$ and the arc with radius $1$ centered at $L$, which is another point where the rotated hexagon intersects the original hexagon.
\end{itemize}

We claim that the univeral covering considered in Reduction 2 can be further reduced by removing the region $WXY$ for a specific angle $\sigma$ that we will specify. 
(In fact, we could remove the whole of region $WXYZ$ outside the arcs $WZ$ and $ZY$ and still be left with a set that contains an isometric copy of every curve of constant width 1.  However, this set would not be convex, since it contains $W$ and $Y$ but not points on the line segment between $W$ and $Y$.  Since a universal covering is required to be convex, we only remove the smaller region $WXY$, one of whose sides is the straight segment $WY$.)

To prove our claim, consider Figure 5.   This shows an axis of symmetry of the hexagon $\mathcal{H}$ going through a point $M$, the midpoint of the side $D_1E_1$.  When the triangles $A, B, C, D, E, F$ are reflected about this axis they are mapped to new triangles $A', B', C', D', E', F'$, some of which are shown in gray in this figure.

\begin{center}
{
\includegraphics[scale=.6]{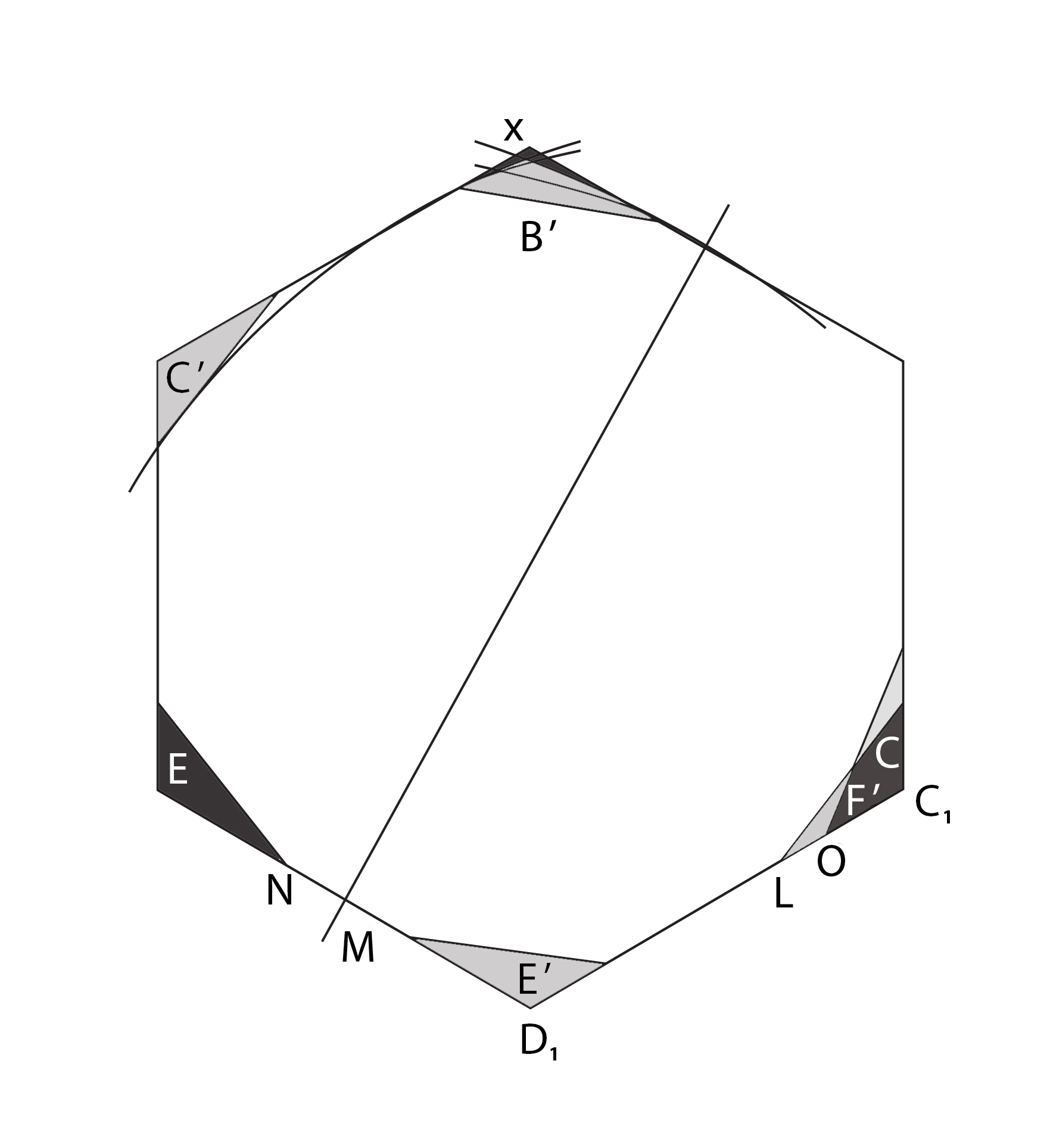}
}

\label{fig5}
5.  Reduction with symmetry
\end{center}

To prove our claim, it suffices to show that any curve of constant width $1$ can be positioned inside the hexagon with the corners removed at $E$ and $C$ in such a way that no point enters the region $WXY$.   To show this, we separately consider three cases.  Case (1) is where the curve of constant width $1$ enters the interior of triangle $E'$. Case (2) is where the curve does not enter the interior of $E'$ but it does enter the interior of region $C'$. Case (3) is where the curve does not enter the interior of $E'$ or $C'$.  

Case (1): If a curve of constant width $1$ enters the interior of $E'$ then it cannot enter the interior of $B'$ since all points inside $B'$ are at a distance greater than $1$ from points inside $E'$. It is therefore sufficient to show that all points in the region $WXY$ are inside $B'$.  This can be checked by calculation for the specific slant angle $\sigma$ to be used.

Case (2): If a curve $\mathcal{Q}$ of constant width $1$ enters the interior of $C'$ then it cannot enter the interior of the region $F'$. It must therefore touch the side $D_1C_1$ between $D_1$ and $L$. It follows that no point inside $\mathcal{Q}$ can lie outside the arc of radius $1$ centered at $L$ within the triangle $A$.  

Therefore, no point of $\mathcal{Q}$ will lie inside the region $WXY$ provided the interior of this region lies outside the arc of radius $1$ centered at $L$.  This condition holds if no points on the segment $WY$ lie inside the arc of radius $1$ centered at $L$.  This in turn follows from $\angle{WYL}$ being greater than a right angle.  We shall check this by a calculation for the specific angle $\sigma$ we use.

Case (3): If a curve $\mathcal{Q}$ of constant width $1$ does not enter the interior of $E'$ or $C'$ then it can be reflected about the axis through $M$ and the center of the hexagon, giving a curve $\mathcal{Q}'$ also of constant width 1.  This reflection maps $C'$ to $C$ and $E'$ to $E$, so $\mathcal{Q}'$ will lie inside the hexagon with the two corners $C$ and $E$ removed. Recall from Reduction 1 that $\mathcal{Q}$ has no points in $C$ or $E$.  Thus $\mathcal{Q}'$ has no points in $C'$ or $E'$. 

Each of the curves $\mathcal{Q}$ and $\mathcal{Q}'$ touch the side of the hexagon between $N$ and $N'$ at a single point. If $\mathcal{Q}$ touches this side between $N$ and $M$ then $\mathcal{Q}'$ will touch this side between $M$ and $N'$, and vice versa. Since reflections are an allowed isometry, we are free to use the curve in either position $\mathcal{Q}$ or $\mathcal{Q}'$, so we choose to use whichever of the two touches the side between $M$ and $N'$. No point inside the curve will then lie both outside the arc of radius $1$ centered at $M$ and to the left of the line of reflection.

Therefore, no point of the chosen curve will lie inside the region $WXY$ provided that no point in the interior of this region lies inside the arc of radius $1$ centered at $M$. This condition holds if no points on the segment $WY$ lie inside the arc of radius $1$ centered at $M$.  This is turn follows from $\angle{MWY}$ being greater than a right angle.  We shall check this by a calculation for the specific angle $\sigma$ we use.

Our computation of the area of the resulting universal covering using Java is available online \cite{Java program}, along with the output \cite{Java output}.  The output lists
various choices of $\sigma$, followed by the corresponding values of the area of the
would-be universal covering, together with the angles $\angle{WYL}$ and  $\angle{MWY}$.  We find that when $\sigma = 1.3^\circ$, the area of the universal covering is 
\[    0.8441153768593765 \]
within the accuracy provided by double-precision floating-point arithmetic. 
For this value of $\sigma$, $\angle{WYL}$ is approximately $90.00593^\circ$
and $\angle{MWY}$ is approximately $122.9277^\circ$.  We could obtain a smaller
area if $\sigma$ were smaller, but if were too small the constraint that $\angle{WYL}$ exceed a right angle would be violated.

Greg Egan has checked our calculations using Mathematica with high-precision arithmetic (2000 digits working precision).  He found that an angle 
\[  \sigma = 1.294389444703601012^\circ \]
still obeys the necessary constraints, giving a universal covering with area
\[   0.844115297128419059\dots.\]
Egan has kindly made his Mathematica notebook available online \cite{Egan notebook}, along with a printout that is readable without this software \cite{Egan printout}.

With Egan's help, we thus claim to have proved that
\[     a \le 0.8441153.   \qedhere \]
\end{proof}

\vskip 5em
\subsection*{Acknowledgements}

We thank Greg Egan for catching some mistakes in this paper, and especially for verifying and improving our result.  We also thank both referees for spotting and helping us fix various errors.

\vskip 2em

\end{document}